\date{\today}
\newcommand{\mathscr}{\mathcal}
\newcommand{\w}{\omega}
\newtheorem{theorem}{Theorem}%[section]
\newtheorem{lemma}[theorem]{Lemma}
\theoremstyle{definition}
\newtheorem*{mtheo*}{Main Theorem}
\newtheorem*{coro*}{Corollary}
\begin{document}

\title[Classifying locally compact semitopological polycyclic monoids]{Classifying locally compact semitopological polycyclic monoids}

\author[S.~Bardyla]{Serhii~Bardyla}
\address{Faculty of Mathematics, National University of Lviv,
Universytetska 1, Lviv, 79000, Ukraine}
\email{sbardyla@yahoo.com}

\keywords{locally compact semitopological semigroup, $\alpha$-polycyclic monoid, bicyclic semigroup.}

\subjclass[2010]{Primary 20M18, 22A15. Secondary 55N07}

\begin{abstract}
We present a complete classification of Hausdorff locally compact polycyclic monoids up to a topological isomorphism. A {\em polycyclic monoid} is an inverse monoid with zero, generated by a subset $\Lambda$ such that $xx^{-1}=1$ for any $x\in\Lambda$ and $xy^{-1}=0$ for any distinct $x,y\in\Lambda$. We prove that any non-discrete Hausdorff locally compact topology with continuous shifts on a polycyclic monoid $M$ coincides with the topology of one-point compactification of the discrete space $M\setminus\{0\}$.
\end{abstract}

\maketitle

\section*{Introduction}
In this paper we present a complete classification of locally compact semitopological polycyclic monoids up to a topological isomorphism.

%All topological spaces considered in this paper are assumed to be Hausdorff.
We shall follow the terminology of~\cite{Clifford-Preston-1961-1967,
Engelking-1989, Lawson-1998, Ruppert-1984}. First we recall some information on inverse semigroups and monoids. %By $\mathbb{N}$ we denote the set of all positive integers and by $\w$ the set of all finite ordinals. So, $\w=\{0\}\cup\mathbb{N}$.
We identify cardinals with the sets of ordinals of smaller cardinality.

A {\em semigroup} is a set $S$ endowed with an associative binary operation $\cdot:S\times S\to S$, $\cdot:(x,y)\mapsto xy$. An element $e\in S$ is called the {\em unit} (resp. {\em zero}) of $S$ if $xe=x=ex$ (resp. $xe=e=ex$) for all $x\in S$. A semigroup can contains at most one unit (which will be denoted by $1$) and at most one zero (denoted by $0$).  A {\em monoid} if a semigroup with a unit.

A semigroup $S$ is called \emph{inverse} if for every element $a\in S$ there exists a unique element $a^{-1}$ (called the {\em inverse} of $a$) such that $aa^{-1}a=a$ and $a^{-1}aa^{-1}=a^{-1}$. An {\em inverse monoid\/} is an inverse semigroup with unit. We say that an inverse monoid $S$ {\em is generated} by a subset $\Lambda\subset S$ if $S$ coincides with the smallest subsemigroup of $S$  containing the set $\Lambda\cup\Lambda^{-1}$.

A {\em polycyclic monoid} is an inverse monoid $S$ with zero $0\ne 1$, which is generated by a subset $\Lambda\subset S$ such that $xx^{-1}=1$ for all $x\in \Lambda$ and $xy^{-1}=0$ for any distinct $x,y\in\Lambda$. If the generating set $\Lambda$ has cardinality $\lambda$, then $S$ is called a {\em $\lambda$-polycyclic monoid}. We claim that $|\Lambda|\ge 2$. In the opposite case, $\Lambda=\{x\}$ is a singleton and $0\in S=\{x^{-n}x^m:n,m\in\w\}$, which implies that $0=x^{-n}x^m$ for some non-negative numbers $n,m$. Then $0=x^{n+1}\cdot 0\cdot x^{-m}=x^{n+1}(x^{-n}x^{m})x^{-m}=x$ and hence $1=xx^{-1}=0x^{-1}=0$, but this contradicts the definition of a polycyclic monoid.

A canonical example of a $\lambda$-polycyclic monoid can be constructed as follows.
Let $\mathcal M_{\lambda^\pm}$ be the monoid of all words in the alphabet $\{x,x^{-1}:x\in\lambda\}$, endowed with the semigroup operation of concatenation of words. The empty word is the unit $1$ of the monoid $\mathcal M_{\lambda^\pm}$. Let $\mathcal M_{\lambda^\pm}^0:=\mathcal M_{\lambda^\pm}\cup\{0\}$ be the monoid $\mathcal M_{\lambda^\pm}$ with the attached external zero, i.e., an element $0\notin\mathcal M_{\lambda^\pm}$ such that $0\cdot x=0=x\cdot 0$ for all $x\in\mathcal M_{\lambda^\pm}^0$. On the monoid $\mathcal M_{\lambda^\pm}^0$ consider the smallest congruence $\sim$ containing the pairs $(xx^{-1},1)$ and $(xy^{-1},0)$ for all distinct elements $x,y\in\lambda$. Then the quotient semigroup $\mathcal M_{\lambda^\pm}^0/_\sim$ is the required canonical example of a $\lambda$-polycyclic monoid, which will be denoted by $\mathcal P_\lambda$ and called {\em the $\lambda$-polycyclic monoid}.

Algebraic properties of the $\lambda$-polycyclic monoid were deeply investigated in \cite{BardGut-2016(1)}. According to \cite[Theorem 2.5]{BardGut-2016(1)}, the semigroup $\mathcal P_\lambda$ is congruence-free, which implies that each $\lambda$-polycyclic monoid is algebraically isomorphic to $\mathcal P_\lambda$.

The aim of this paper is to describe Hausdorff locally compact topologies on $\mathcal P_\lambda$, compatible with the algebraic structure of the semigroup  $\mathcal P_\lambda$. A suitable compatibility condition is given by the notion of a semitopological semigroup.

A {\em semitopological semigroup} is a semigroup $S$ endowed with a Hausdorff topology making the binary operation $S\times S\to S$, $(x,y)\mapsto xy$, separately continuous. If this operation is jointly continuous, then $S$ is called a {\em topological semigroup}.

For a cardinal $\lambda\ge2$ by ${\mathcal P}^d_\lambda$ we shall denote the $\lambda$-polycyclic monoid $\mathcal P_\lambda$ endowed with the discrete topology and by ${\mathcal P}^c_\lambda$ the monoid $\mathcal P_\lambda$ endowed with the compact topology $\tau=\big\{U\subset\mathcal P_\lambda:0\in U\;\Rightarrow (\mathcal P_\lambda\setminus U\mbox{ is finite})\big\}$ of one-point compactification of the discrete space $\mathcal P_\lambda\setminus\{0\}$. It is clear that ${\mathcal P}^d_\lambda$ is a topological monoid. On the other hand, ${\mathcal P}^c_\lambda$ is a compact semitopological monoid, which is not a topological semigroup.

By \cite{BardGut-2016(1)}, each locally compact topological $\lambda$-polycyclic monoid is discrete and hence is topologically isomorphic to ${\mathcal P}^d_\lambda$. In the semitopological case we have the following dichotomy, which is the main result of this paper.

\begin{mtheo*} Any locally compact semitopological polycyclic monoid $S$ is either discrete or compact. More precisely, $S$ is topologically isomorphic either to ${\mathcal P}^d_\lambda$ or to ${\mathcal P}^c_\lambda$ for a unique cardinal $\lambda\ge 2$.
\end{mtheo*}

Since the compact semitopological $\lambda$-polycyclic monoid ${\mathcal P}^c_\lambda$ fails to be a topological semigroup, Main Theorem implies the mentioned result of \cite{BardGut-2016(1)}:

\begin{coro*} Any locally compact topological polycyclic monoid $S$ is discrete. More precisely, $S$ is topologically isomorphic to the topological $\lambda$-polycyclic monoid ${\mathcal P}^d_\lambda$ for a unique cardinal $\lambda\ge 2$.
\end{coro*}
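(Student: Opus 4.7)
The corollary is an almost immediate consequence of the Main Theorem together with the (already-asserted) fact that the compact model $\mathcal P^c_\lambda$ fails to be a topological semigroup. I would organise the argument in two short steps and then check uniqueness.

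First, every topological semigroup is by definition semitopological, because joint continuity of multiplication is strictly stronger than separate continuity. Hence the Main Theorem applies to $S$ verbatim: it produces a cardinal $\lambda\ge 2$ and a topological isomorphism of $S$ onto either $\mathcal P^d_\lambda$ or $\mathcal P^c_\lambda$. A topological isomorphism transports joint continuity of multiplication, so to conclude $S\cong \mathcal P^d_\lambda$ it suffices to rule out the second alternative by showing that $\mathcal P^c_\lambda$ is not a topological semigroup.

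Second, to witness the failure of joint continuity in $\mathcal P^c_\lambda$, I would fix any generator $x\in\Lambda$ and consider the sequence of positive powers $(x^n)_{n\in\w}$. These are pairwise distinct non-zero elements of $\mathcal P_\lambda$, because purely positive words suffer no cancellations under the defining relations $xx^{-1}=1$, $xy^{-1}=0$. In the one-point compactification topology, whose open neighbourhoods of $0$ are precisely the cofinite sets containing $0$, pairwise distinctness immediately gives $x^n\to 0$ and, by the same reasoning, $x^{-n}\to 0$. However a straightforward induction using $xx^{-1}=1$ yields
\[
x^n\cdot x^{-n}\;=\;x^{n-1}(xx^{-1})x^{-(n-1)}\;=\;x^{n-1}x^{-(n-1)}\;=\;\cdots\;=\;1
\]
for every $n\ge 1$, so the products do not converge to $0\cdot 0=0$. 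This exhibits a failure of joint continuity at $(0,0)$, so $\mathcal P^c_\lambda$ is not a topological semigroup.

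Combining the two steps forces $S\cong\mathcal P^d_\lambda$. Uniqueness of $\lambda$ is inherited directly from the Main Theorem (and is in any case algebraic, since a topological isomorphism is in particular an algebraic one and $\lambda$ is an algebraic invariant of the monoid $\mathcal P_\lambda$). The only genuine step beyond invoking the Main Theorem is the short joint-discontinuity computation for $\mathcal P^c_\lambda$, which is the main—and essentially trivial—obstacle.
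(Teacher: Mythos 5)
Your proposal is correct and follows the same route as the paper, which derives the corollary in one sentence from the Main Theorem plus the fact that $\mathcal P^c_\lambda$ is not a topological semigroup. The only difference is that you actually verify that fact (via $x^n\to 0$, $x^{-n}\to 0$, $x^nx^{-n}=1\ne 0$), whereas the paper declares it ``clear'' in the introduction; your computation is valid and fills that small gap.
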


Some other topologizability results of the same flavor can be found in \cite{Weil,Selden 1985,Hogan 1987,Banakh-Dimitrova-Gutik-2009,Mesyan-Mitchell-Morayne-Peresse-2013,Gutik-2015,Bardyla-2016,BardGut-2016(1),BardGut-2016(2)}.

\section*{Proof of Main Theorem}

The proof of Main Theorem is divided into a series of $12$ lemmas.

Let $S$ be a non-discrete locally compact semitopological polycyclic monoid and let $\Lambda$ be its generating set. By \cite[Proposition 2.2]{BardGut-2016(1)}, $S$ is algebraically isomorphic to the $\lambda$-polycyclic monoid $\mathcal P_\lambda$ for a unique cardinal $\lambda\ge 2$. So, we can identify $S$ with $\mathcal P_\lambda$ and the cardinal $\lambda$ with the generating set $\Lambda$ of the inverse monoid $S$.

Let $S^+$ be the submonoid of $S$, generated by the set $\Lambda$ (i.e., $S^+$ is the smallest submonoid of $S$ containing the generating set $\Lambda$). Elements of $S^+$ can be identified with words in the alphabet $\Lambda$. Such words will be called {\em positive}. The relations between the generators of $S$ guarantee that each non-zero element $a$ of $S$ can be uniquely written as $u^{-1}v$ for some positive words $u,v\in S^+$. Then by ${\downarrow}a$ we denote the set of all prefixes of the word $u^{-1}v$. For a subset $C\subset S$ we put ${\downarrow} C=\bigcup_{a\in C}{\downarrow} a$.

The following algebraic property of a polycyclic monoid is proved in \cite[Proposition~2.7]{BardGut-2016(1)}.

\begin{lemma}\label{proposition-2.8}
For any non-zero elements $a,b,c\in S$, the set $\{x\in S:axb=c\}$ is finite.
\end{lemma}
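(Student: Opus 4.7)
Since $c\ne 0$ and $a\cdot 0\cdot b=0$, any $x$ satisfying $axb=c$ must be non-zero, hence admits a canonical form $x=r^{-1}s$ with positive words $r,s\in S^+$. Define the length $|y|:=|p|+|q|$ for a non-zero $y=p^{-1}q$, and similarly for $a,b,c$. The plan is to bound both $|x|$ and the set of letters of $\Lambda$ that can occur in $r$ and $s$, both in terms of the data $a,b,c$; these two bounds together force the solution set to be finite.

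For the length bound, I first record the following multiplication lemma: for non-zero $y_1=p_1^{-1}q_1$ and $y_2=p_2^{-1}q_2$ with $y_1y_2\ne 0$, the middle word $q_1p_2^{-1}$ is non-zero only when one of $q_1,p_2$ is a suffix of the other (otherwise, iterated application of $xx^{-1}=1$ for matching innermost letters and $xy^{-1}=0$ for mismatched generators collapses the product to $0$), in which case
\[
|y_1y_2|\;=\;|y_1|+|y_2|-2\min(|q_1|,|p_2|)\;\ge\;\bigl||y_1|-|y_2|\bigr|.
\]
Applied to $c=(ax)b$ this gives $|ax|\le|c|+|b|$, and applied again to $ax$ it gives $|x|\le|ax|+|a|\le|a|+|b|+|c|$.

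For the alphabet bound, I split into four cases according to which of $\{v_1,r\}$ and which of $\{u_2,s\}$ is a suffix of the other (writing $a=u_1^{-1}v_1$, $b=u_2^{-1}v_2$, $c=u_3^{-1}v_3$). In each case a direct computation of the canonical form of $axb$ and comparison with $c$ forces $r$ to be either a suffix of $v_1$, or of the shape $\alpha v_1$ with $\alpha$ a prefix of $u_3$; symmetrically, $s$ is either a suffix of $u_2$ or of the shape $\beta u_2$ with $\beta$ a prefix of $v_3$. Consequently every letter of $r$ and $s$ appears already among the letters of $v_1,u_2,u_3,v_3$, which form a finite subset of $\Lambda$; combined with the length bound this leaves only finitely many admissible $x$.

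The main obstacle is the bookkeeping in the four-case analysis. In the subcase where both $r=\alpha v_1$ and $s=\beta u_2$ are strict extensions, the canonical reduction of $axb$ triggers a further suffix comparison between $\alpha$ and a prefix of $u_3$ (and between $\beta$ and a prefix of $v_3$), so each branch subdivides and one must patiently verify that every resulting subbranch contributes only finitely many pairs $(r,s)$. The underlying combinatorics on canonical forms $u^{-1}v$ is elementary but tedious.
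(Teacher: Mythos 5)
Your argument is essentially correct, but note that the paper itself does not prove this lemma: it simply cites \cite[Proposition~2.7]{BardGut-2016(1)}, so there is no in-paper proof to compare against, and a self-contained combinatorial argument like yours is exactly what is needed. Your multiplication lemma is right: for $y_1=p_1^{-1}q_1$ and $y_2=p_2^{-1}q_2$ the product is non-zero only if one of $q_1,p_2$ is a suffix of the other, and then $|y_1y_2|=|y_1|+|y_2|-2\min(|q_1|,|p_2|)\ge\bigl||y_1|-|y_2|\bigr|$, which gives $|x|\le|a|+|b|+|c|$ as you claim. The alphabet bound also goes through, with one small imprecision: in the branch where $v_1$ is a proper suffix of $r$ (so $r=\alpha v_1$ and $ax=(\alpha u_1)^{-1}s$) and additionally $s$ is a proper suffix of $u_2$ (say $u_2=w's$), the comparison with $c=u_3^{-1}v_3$ yields $w'\alpha u_1=u_3$, so $\alpha$ is a \emph{factor} of $u_3$ (with $\alpha u_1$ a suffix of $u_3$) rather than a prefix; this does not hurt you, since the letters of $\alpha$ still lie in $u_3$ and a fixed word has only finitely many factors. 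In fact, once the case analysis is done you do not need the length bound at all: in every branch $r$ is either a suffix of $v_1$ or of the form $\alpha v_1$ with $\alpha u_1$ a suffix of $u_3$, and $s$ is either a suffix of $u_2$ or of the form $w'u_2$ with $w'$ a prefix of $v_3$, so each of $r$ and $s$ ranges over an explicitly finite set. The length bound is a harmless redundancy that serves as a useful cross-check, and works even over an infinite generating set only in combination with your alphabet bound, as you correctly arrange.
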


This lemma will be applied in the proof of the following useful fact, proved in \cite[Proposition~3.1]{BardGut-2016(1)}.

\begin{lemma}\label{l:1} All non-zero elements of $S$ are isolated points in the space $S$.
\end{lemma}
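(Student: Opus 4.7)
My plan is to prove the lemma in two steps. First I show that the unit $1\in S$ is isolated; then, using this together with Lemma~\ref{proposition-2.8}, I deduce the general statement.

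\textbf{Step 1: $1$ is isolated.} Suppose to the contrary that there is a net $(a_\alpha)$ in $S\setminus\{1\}$ with $a_\alpha\to 1$. Since $\{0\}$ is closed in the Hausdorff space $S$ and $1\neq 0$, we may assume that each $a_\alpha$ is non-zero, and write $a_\alpha=u_\alpha^{-1}v_\alpha$ with $u_\alpha,v_\alpha\in S^+$. Choose two distinct generators $x,y\in\Lambda$ (available because $|\Lambda|\ge 2$) and consider the maps
\[
T_x(a)=xax^{-1},\qquad T_y(a)=yay^{-1},
\]
each a composition of a left and a right shift, hence continuous. Both fix $1$, so $T_x(a_\alpha)\to 1$ and $T_y(a_\alpha)\to 1$; by closedness of $\{0\}$ these nets are eventually non-zero. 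A short case analysis using the defining relations of the polycyclic monoid (in particular $xz^{-1}=0$ for distinct generators $z\neq x$) shows that $T_x(a_\alpha)\neq 0$ forces each of $u_\alpha,v_\alpha$ to be either empty or to end in $x$; symmetrically $T_y(a_\alpha)\neq 0$ forces each of them to be either empty or to end in $y$. Since $x\neq y$, for $\alpha$ sufficiently large this simultaneously imposes $u_\alpha=v_\alpha=1$, i.e.\ $a_\alpha=1$, contradicting the choice of the net.

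\textbf{Step 2: every non-zero $a\in S$ is isolated.} Let $a\in S\setminus\{0\}$, written in normal form as $a=u^{-1}v$ with $u,v\in S^+$. Define $\phi:S\to S$ by $\phi(x)=uxv^{-1}$; this is a continuous composition of shifts, and since $uu^{-1}=1=vv^{-1}$ for positive words, $\phi(a)=uu^{-1}vv^{-1}=1$. Lemma~\ref{proposition-2.8} then implies that $\phi^{-1}(1)=\{x\in S:uxv^{-1}=1\}$ is finite. By Step~1 the singleton $\{1\}$ is open, so continuity of $\phi$ makes $\phi^{-1}(\{1\})$ a finite open neighborhood of $a$. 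In the Hausdorff space $S$ we can separate $a$ from the remaining (finitely many) points of this neighborhood, producing an open set equal to $\{a\}$. Hence $a$ is isolated.

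The main obstacle is Step~1, where the crucial point is to exploit the availability of two distinct generators: the continuous maps $T_x$ and $T_y$ combined with the zero-multiplication rule $xz^{-1}=0$ simultaneously squeeze the normal form of any element near $1$ down to the trivial form $1^{-1}\cdot 1$. Step~2 is then a clean consequence of the finiteness provided by Lemma~\ref{proposition-2.8} together with the isolation of $1$.
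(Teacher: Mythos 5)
Your proof is correct, and its overall two--step skeleton (first isolate the unit, then transport isolatedness to an arbitrary non-zero $a=u^{-1}v$ via the identity $uav^{-1}=1$ and the finiteness supplied by Lemma~\ref{proposition-2.8}) is exactly the paper's. Step 2 is essentially identical to the published argument. Where you genuinely diverge is Step 1. The paper fixes a \emph{single} generator $g$, uses the idempotent $e=g^{-1}g$ to see that the principal ideals $g^{-1}S=eS$ and $Sg=Se$ are closed (as retracts of the Hausdorff space $S$), and then chooses a neighborhood $U_1$ of $1$ disjoint from these ideals with $e\cdot U_1$ and $U_1\cdot e$ avoiding $0$; the two constraints ``$u$ does not end in $g$'' (from $a\notin g^{-1}S$) and ``$eu^{-1}v\ne 0$'' are incompatible unless $u$ is empty, and symmetrically for $v$. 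You instead take \emph{two} distinct generators $x\ne y$ and the continuous conjugations $a\mapsto xax^{-1}$, $a\mapsto yay^{-1}$, both fixing $1$; requiring both images to be eventually non-zero forces $u_\alpha$ and $v_\alpha$ to end simultaneously in $x$ and in $y$, hence to be empty. Both are ``squeeze the normal form'' arguments, but yours dispenses with the closed-ideal/retraction step and uses only separate continuity plus Hausdorffness of $S\setminus\{0\}$, at the mild cost of invoking $|\Lambda|\ge 2$ (which the paper establishes in the introduction, so this is harmless). Your case analysis for $T_x(a)=xu^{-1}vx^{-1}\ne 0$ checks out: $xu^{-1}=0$ unless $u$ is empty or ends in $x$, and $vx^{-1}=0$ unless $v$ is empty or ends in $x$. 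A minor cosmetic remark: the net formulation in Step 1 can be replaced by a direct separate-continuity argument (choose a neighborhood $U$ of $1$ with $xUx^{-1}\cup yUy^{-1}\subset S\setminus\{0\}$), which would make it read even closer to the paper's style, but this changes nothing mathematically.
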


\begin{proof} For convenience of the reader we present a short proof of this important lemma.
First we show that the unit $1$ is an isolated point of the semitopological monoid $S$. Take any generator $g\in\Lambda$ and consider the idempotent $e=g^{-1}g$ of $S$. Since the map $S\to eS$, $x\mapsto ex$, is a retraction of the Hausdorff space $S$ onto $eS$, the principal right ideal $eS=g^{-1}S$ is closed in $S$. By the same reason, the principal left ideal $Se=Sg$ is closed in $S$. The separate continuity of the semigroup operation yields a neighborhood $U_1\subset S\setminus (g^{-1}S\cup Sg)$ of $1$ such that $0\notin (e\cdot U_1)\cap (U_1\cdot e)$. We claim that $U_1=\{1\}$. In the opposite case, $U_1$ contains some element $a\ne 1$, which can be written as $u^{-1}v$ for some positive words $u,v\in S^+$. Since $a\ne 1$ one of the words $u,v$ is not empty. If $u$ is not empty, then $a\in U_1\subset S\setminus g^{-1}S$ implies that the word $u^{-1}$ does not start with $g^{-1}$. In this case $ea=g^{-1}gu^{-1}v=g^{-1}\cdot 0=0$, which contradicts the choice of the neighborhood $U_1\ni a$. If the word $v$ is not empty, then $a\in U_1\subset S\setminus Sg$ implies that $v$ does not end with $g$. In this case $ae=u^{-1}vg^{-1}g=0$, again contradicting the choice of $U_1$.
This contradiction shows that the unit $1$ is an isolated point of $S$.

Now we can prove that each non-zero point $a\in S$ is isolated. Write $a$ as $u^{-1}v$ for some positive words $u,v\in S^+$. Since $uav^{-1}=1$, the separate continuity of the semigroup operation on $S$, yields an open neighborhood $O_a\subset S$ of $a$ such that $uO_av^{-1}\subset U_1=\{1\}$. By Lemma~\ref{proposition-2.8}, the neighborhood $O_a$ is finite and hence the singleton $\{a\}=O_a\setminus (O_a\setminus\{a\})$ is open, which means that the point $a$ is isolated in $S$.
\end{proof}

Lemma~\ref{l:1} implies that the locally compact space $S$ has a neighborhood base at zero, consisting of compact sets. It also implies the following useful lemma.

\begin{lemma}\label{l1} For any compact neighborhoods $U_0,V_0\subset S$ of zero  the set $U_0\setminus V_0$ is finite.
\end{lemma}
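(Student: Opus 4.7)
The plan is to reduce the statement to the fact that a compact subspace consisting entirely of isolated points must be finite. Since $V_0$ is a neighborhood of $0$, it contains an open subset $W \subseteq V_0$ with $0 \in W$. Then $U_0 \setminus V_0 \subseteq U_0 \setminus W$, so it suffices to prove that $U_0 \setminus W$ is finite.

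Next, I would observe that $U_0 \setminus W = U_0 \cap (S \setminus W)$ is a closed subset of the compact space $U_0$ (because $W$ is open), and is therefore itself compact. Moreover, since $0 \in W$, every point of $U_0 \setminus W$ is non-zero, so by Lemma~\ref{l:1} every point of $U_0 \setminus W$ is isolated in $S$, and hence isolated in the subspace $U_0 \setminus W$.

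The main (and only) step is then to invoke the elementary topological fact that a compact Hausdorff space in which every point is isolated is finite: the singletons $\{a\}$, $a \in U_0 \setminus W$, form an open cover of the compact set $U_0 \setminus W$, so finitely many of them cover, forcing $U_0 \setminus W$ itself to be finite. Consequently $U_0 \setminus V_0$ is finite, as claimed. No real obstacle is anticipated; the argument is a direct application of Lemma~\ref{l:1} together with local compactness.
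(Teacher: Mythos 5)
Your argument is correct and is exactly the reasoning the paper intends: the lemma is stated there without proof as an immediate consequence of Lemma~\ref{l:1}, namely that $U_0\setminus V_0$ sits inside the compact set $U_0\setminus W$ ($W$ an open neighborhood of $0$ inside $V_0$), all of whose points are isolated, and a compact discrete space is finite. Nothing is missing.
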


For an element $u\in S$ by $\mathcal R_u:=\{x\in S:xS=uS\}$ we denote its {\em Green $\mathcal R$-class} in $S$. Here $uS=\{us:s\in S\}$ is the right principal ideal generated by the element $u$.

\begin{lemma}\label{l:new} Every non-zero $\mathcal R$-class in $S$ coincides with the $\mathcal R$-class $\mathcal R_{u^{-1}}=\mathcal R_{u^{-1}u}$ for some positive word $u\in S^+$.
\end{lemma}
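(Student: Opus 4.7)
The plan is to argue purely algebraically, using the canonical form $a = u^{-1}v$ of a non-zero element together with the defining relations of $\mathcal{P}_\lambda$. No topology is needed, and the key identity is the characterization $\mathcal{R}_a = \mathcal{R}_b \iff aS = bS$ for elements of a monoid.

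First I would verify the auxiliary identity $vv^{-1} = 1$ for every positive word $v \in S^+$. For $v$ the empty word this is trivial; for $v \in \Lambda$ it is built into the definition of a polycyclic monoid; and for $v = v_1v_2\cdots v_k$ with $k \geq 2$ it follows by induction on $k$, since
$$vv^{-1} = v_1\cdots v_k v_k^{-1}\cdots v_1^{-1} = v_1\cdots v_{k-1}v_{k-1}^{-1}\cdots v_1^{-1}.$$

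Next, fix a non-zero $a \in S$ and write it uniquely as $a = u^{-1}v$ with $u,v \in S^+$. I would prove $aS = u^{-1}S$ directly: the inclusion $aS \subseteq u^{-1}S$ is immediate from $a = u^{-1}v$, and for the reverse inclusion I use the identity from the previous step to write $u^{-1} = u^{-1}(vv^{-1}) = (u^{-1}v)v^{-1} = av^{-1} \in aS$. This yields $\mathcal{R}_a = \mathcal{R}_{u^{-1}}$. The same trick gives $\mathcal{R}_{u^{-1}} = \mathcal{R}_{u^{-1}u}$: the identity $u^{-1} = (u^{-1}u)u^{-1}$ shows $u^{-1}S \subseteq (u^{-1}u)S$, while $u^{-1}u = u^{-1}\cdot u \in u^{-1}S$ gives the opposite inclusion.

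Chaining the two identifications produces the desired conclusion: every non-zero $\mathcal{R}$-class equals $\mathcal{R}_{u^{-1}} = \mathcal{R}_{u^{-1}u}$ for the positive word $u$ extracted from the canonical form. Uniqueness of $u$ for a prescribed non-zero $\mathcal{R}$-class follows from the uniqueness of the canonical form applied to the idempotent representative $u^{-1}u$. There is no real obstacle here; the only thing to watch is that the computation $vv^{-1} = 1$ uses only the relations $xx^{-1} = 1$ on generators, so it remains valid within $S^+$ and never forces a product to collapse to $0$.
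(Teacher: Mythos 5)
Your argument is correct and follows essentially the same route as the paper: both write a non-zero element in the canonical form $a=u^{-1}v$ and use the identity $u^{-1}v\cdot v^{-1}=u^{-1}$ (which rests on $vv^{-1}=1$ for positive words $v$) to conclude $\mathcal R_a=\mathcal R_{u^{-1}}=\mathcal R_{u^{-1}u}$. You merely spell out the two-sided inclusions of principal right ideals and the induction behind $vv^{-1}=1$, which the paper leaves implicit.
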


\begin{proof} Each non-zero element of the semigroup $\mathcal P_\lambda$ can be written as $u^{-1}v$ for some positive words $u,v\in S^+$. Taking into account that $u^{-1}v\cdot v^{-1}=u^{-1}$, we conclude that $\mathcal R_{u^{-1}v}=\mathcal R_{u^{-1}}=\mathcal R_{u^{-1}u}$.
\end{proof}

In the following Lemmas~\ref{l2}--\ref{main2} we assume that $U_0$ is any fixed compact neighborhood of zero in the semitopological monoid $S$. Since zero is a unique non-isolated point in $S$, the neighborhood $U_0$ is infinite.

\begin{lemma}\label{l2} The neighborhood $U_0$ has infinite intersection with some $\mathcal R$-class of $S$.
\end{lemma}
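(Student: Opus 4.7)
I plan to argue by contradiction: suppose $U_0\cap\mathcal{R}_c$ is finite for every $\mathcal{R}$-class $c$ of $S$. Lemma~\ref{l:1} together with the non-discreteness of $S$ makes $0$ the unique non-isolated point, so the compact neighborhood $U_0$ is infinite; set $A:=U_0\setminus\{0\}$.

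Fix any generator $g\in\Lambda$ and consider the right-translation $R_g\colon S\to S$, $x\mapsto xg$, which is continuous by separate continuity and fixes $0$. Uniqueness of the normal form $u^{-1}v$ makes $R_g$ injective on $S\setminus\{0\}$ and fixed-point-free there (since $xg=x$ would force $g=1$ in the free monoid $S^+$), while the identity $(xg)(xg)^{-1}=xx^{-1}$ together with Lemma~\ref{l:new} shows that $R_g$ preserves $\mathcal{R}$-classes. Applying Lemma~\ref{l1} to the compact neighborhood $R_g^{-1}(U_0)\cap U_0$ of $0$, the set $F:=\{x\in A:xg\notin U_0\}$ is finite.

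Next I would form the directed graph $\Gamma$ on vertex set $A$ by drawing the edge $x\to xg$ for each $x\in A\setminus F$ (note $xg\in A$, since $xg\ne 0$ and $xg\in U_0$). Injectivity of $R_g$ forces in-degree at most $1$, the edge definition forces out-degree at most $1$, and the absence of fixed points rules out directed cycles. Hence each connected component of $\Gamma$ is a simple directed path which, by $\mathcal{R}$-class preservation, lies in a single $\mathcal{R}$-class. By the contradiction hypothesis every component is therefore finite; each finite component terminates at a unique vertex of out-degree zero, which by construction belongs to $F$, and distinct components have distinct terminal vertices.

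Consequently the number of components of $\Gamma$ is at most $|F|<\infty$, so $|A|$ is a sum of finitely many finite cardinals, hence finite, contradicting $|A|=\infty$. The main obstacle is justifying that the components are finite: an infinite forward, backward, or bi-infinite path in $\Gamma$ would put infinitely many vertices of $U_0$ into a single $\mathcal{R}$-class, violating the hypothesis directly.
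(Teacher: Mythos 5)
Your argument is correct and reaches the required contradiction, but it packages the combinatorics differently from the paper. Both proofs rest on the same two facts: (i) by separate continuity of $x\mapsto xg$ at $0$ and Lemma~\ref{l1}, the set of $x\in U_0\setminus\{0\}$ with $xg\notin U_0$ is finite; and (ii) right multiplication by a generator $g$ preserves the $\mathcal R$-class of a non-zero element while strictly lengthening the second coordinate of its normal form $u^{-1}v$. The paper exploits these by selecting, in each $\mathcal R$-class meeting $U_0$, the element $u^{-1}v_u$ with $v_u$ of maximal length; these maximal elements form an infinite set which is then trapped inside the finite set $U_0\setminus V_0$, giving the contradiction in one line. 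You instead decompose $U_0\setminus\{0\}$ into the orbit chains of the injective map $x\mapsto xg$ and count: at most $|F|$ chains (one terminal vertex in $F$ per chain), each chain finite because it lies in a single $\mathcal R$-class, hence $U_0$ finite. Your route is somewhat longer but makes the chain structure of $U_0$ under right translation explicit; the paper's maximal-element selection is the more economical dual formulation --- indeed every one of the paper's maximal elements $u^{-1}v_u$ lies in your set $F$, since $u^{-1}v_ug$ stays in $\mathcal R_{u^{-1}}$ with a longer word and so must leave $U_0$.

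One small imprecision to repair: ``the absence of fixed points rules out directed cycles'' only excludes loops. A directed cycle of length $n\ge 2$ would mean $xg^{n}=x$, i.e.\ a fixed point of the $n$-th iterate of $R_g$, which fixed-point-freeness of $R_g$ itself does not address. The exclusion is still immediate --- if $x=u^{-1}v$ then $xg^{n}=u^{-1}(vg^{n})$ has a strictly longer normal form, so no power of $R_g$ has a non-zero fixed point --- but you do need to say it, because a cycle component would be finite yet have no vertex of out-degree zero, and your count of components by their terminal vertices in $F$ would not see it.
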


\begin{proof}
To derive a contradiction, assume $U_0$ has finite intersection with each $\mathcal R$-class of the semigroup $S$. Taking into account that $U_0$ is infinite and applying Lemma~\ref{l:new}, we can see that the set $B=\{u\in S^+: \mathscr{R}_{u^{-1}}\cap U_0\neq\emptyset\}$ is infinite. For every $u\in B$ denote by $v_{u}$ a longest positive word in $S^+$ such that $u^{-1}v_{u}\in \mathscr{R}_{u^{-1}}\cap U_0$ (such word $v_u$ exists as the set $\mathcal R_{u^{-1}}\cap U_0$ is finite). It follows that $A=\{u^{-1}v_{u}: u\in B\}$ is an infinite subset of $U_0$. Fix any element $g$ of the generating set $\Lambda$ of $S$. Since $0\cdot g=0$, we can use the separate continuity of the semigroup operation of $S$ and find a compact neighborhood $V_0\subseteq U_0$ of zero such that $V_0\cdot g\subseteq U_0$. But then $V_0\subseteq U_0\setminus A$ which contradicts Lemma \ref{l1}.
\end{proof}

\begin{lemma}\label{l3} The neighborhood $U_0$ has infinite intersection with each non-zero $\mathcal R$-class of the semigroup $S$.
\end{lemma}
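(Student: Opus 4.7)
My plan is to bootstrap Lemma~\ref{l2}, which supplies at least one positive word $u \in S^+$ with $\mathcal R_{u^{-1}} \cap U_0$ infinite, to the analogous assertion for every non-zero $\mathcal R$-class. By Lemma~\ref{l:new} an arbitrary non-zero $\mathcal R$-class has the form $\mathcal R_{w^{-1}}$ for some positive word $w \in S^+$, so it is enough to fix such a $w$ and produce infinitely many elements of $\mathcal R_{w^{-1}} \cap U_0$.

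The transport mechanism is left multiplication by $a := w^{-1}u$. The identities $uu^{-1} = 1 = ww^{-1}$, obtained by induction on the length of a positive word from the defining relation $xx^{-1}=1$ for generators, give
\[
a\cdot u^{-1} = w^{-1}, \qquad a^{-1}a = (u^{-1}w)(w^{-1}u) = u^{-1}u.
\]
Consequently, for every $x \in \mathcal R_{u^{-1}}$ one has $(ax)S = a\cdot u^{-1}S = w^{-1}S$, so $ax \in \mathcal R_{w^{-1}}$; in particular $ax \neq 0$. Moreover, since $x \in \mathcal R_{u^{-1}} = \mathcal R_{u^{-1}u}$ forces $xx^{-1}=u^{-1}u$ in the inverse semigroup $S$, one gets $(a^{-1}a)x = (u^{-1}u)x = xx^{-1}\cdot x = x$; hence left multiplication by $a$ is injective on $\mathcal R_{u^{-1}}$.

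The topological step is then routine. Because $a\cdot 0 = 0$, separate continuity of the semigroup operation furnishes a compact neighborhood $V_0 \subseteq U_0$ of zero with $aV_0 \subseteq U_0$. By Lemma~\ref{l1}, $U_0 \setminus V_0$ is finite, so $V_0 \cap \mathcal R_{u^{-1}}$ remains infinite; its image $a(V_0 \cap \mathcal R_{u^{-1}})$ is then an infinite subset of $U_0 \cap \mathcal R_{w^{-1}}$, as required.

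I do not foresee a serious obstacle; the only piece needing care is the small amount of inverse-semigroup arithmetic linking $a^{-1}a$, the idempotent $u^{-1}u$, and the $\mathcal R$-class $\mathcal R_{u^{-1}}$, which all follow cleanly from the relations defining $\mathcal P_\lambda$ together with Lemma~\ref{l:new}.
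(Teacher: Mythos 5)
Your proposal is correct and follows essentially the same route as the paper: transport the infinite set $\mathcal R_{u^{-1}}\cap U_0$ from Lemma~\ref{l2} into the target class by left multiplication by $w^{-1}u$, using separate continuity at $0=w^{-1}u\cdot 0$ and Lemma~\ref{l1} to stay inside $U_0$. Your explicit verification that left multiplication by $a$ is injective on $\mathcal R_{u^{-1}}$ (via $a^{-1}a=u^{-1}u$) is a small welcome addition that the paper leaves implicit.
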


\begin{proof}  By Lemma~\ref{l:new}, any non-zero $\mathcal R$-class of the semigroup $S=\mathcal P_\lambda$ is of the form $\mathcal R_{v^{-1}}$ for some positive word $v\in S^+$. By Lemmas~\ref{l:new} and \ref{l2}, for some element $u\in S^+$ the intersection $U_0\cap\mathcal R_{u^{-1}}$ is infinite. Observe that $v^{-1}u\cdot \mathcal R_{u^{-1}}\subset \mathcal R_{v^{-1}}$. By the separate continuity of the semigroup operation at $0=v^{-1}u\cdot 0$, there exists a neighborhood $V_0\subset S$ of zero such that $v^{-1}u\cdot V_0\subset U_0$. By Lemma~\ref{l1}, the difference $U_0\setminus V_0$ is finite, which implies that the intersection $V_0\cap \mathcal R_{u^{-1}}$ is infinite. Then the set $v^{-1}u\cdot (V_0\cap\mathcal R_{u^{-1}})\subset U_0\cap \mathcal R_{v^{-1}}$ is infinite, too.
\end{proof}

%Given two sets $A,B$, we write $A\subset^* B$ if $A\setminus B$ is finite.

%For a finite subset $F\subset \Lambda$, let $\mathcal P_F$ be the smallest subsemigroup of $S$ containing the set $F\cup F^{-1}\cup\{0,1\}$. %If $|F|>1$, then $\mathcal P_F$ is a finitely generated polycyclic monoid.

\begin{lemma}\label{l4} If the generating set $\Lambda$ is finite, then the neighborhood $U_0$ contains all but finitely many elements of the $\mathcal R$-class $\mathcal R_1=\{x\in S:xS=S\}$.
\end{lemma}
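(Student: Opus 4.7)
The plan is to show that $A:=\mathcal R_1\setminus U_0$ is finite by a counting argument that crucially exploits the finiteness of $\Lambda$. First I would identify $\mathcal R_1$ with the submonoid $S^+$ of positive words: by Lemma~\ref{l:new} every non-zero $\mathcal R$-class has the form $\mathcal R_{u^{-1}u}$ for some positive word $u$, and since $uu^{-1}=1$ one has $\mathcal R_{u^{-1}u}\cdot S=u^{-1}uS=u^{-1}S$, which equals $S$ only when $u$ is empty (otherwise $u^{-1}s=1$ would force $s=u$ and then $u^{-1}u=1$, a contradiction). Thus $\mathcal R_1=S^+$, and the goal becomes to bound the number of positive words lying outside $U_0$.

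For each generator $g\in\Lambda$, separate continuity of the left shift $x\mapsto g^{-1}x$ at the point $g^{-1}\cdot 0=0$ provides a compact neighborhood $W_g\subset U_0$ of zero with $g^{-1}W_g\subset U_0$; by Lemma~\ref{l1} the set $F_g:=U_0\setminus W_g$ is finite. The key observation is that for every $v\in A$ and every $g\in\Lambda$, the positive word $gv$ lies in $A\cup F_g$: if $gv$ belonged to $W_g$ then $v=g^{-1}(gv)\in g^{-1}W_g\subset U_0$ would contradict $v\in A$, so if $gv\in U_0$ then $gv\in F_g$, while otherwise $gv\in\mathcal R_1\setminus U_0=A$.

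The counting step closes the argument. Positive words are uniquely determined by their leading letter, so the sets $gA$ for distinct $g\in\Lambda$ are pairwise disjoint, and each map $v\mapsto gv$ is injective, giving $\bigl|\bigsqcup_{g\in\Lambda}gA\bigr|=|\Lambda|\cdot|A|$. Combining this with the inclusions $gA\subset A\cup F_g$ yields
\[
|\Lambda|\cdot|A|\;\le\;|A|+\sum_{g\in\Lambda}|F_g|,
\]
so $(|\Lambda|-1)\,|A|\le\sum_{g\in\Lambda}|F_g|<\infty$. Since $|\Lambda|\ge 2$ by the definition of a polycyclic monoid, this forces $|A|$ to be finite. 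The argument hinges on both hypotheses $|\Lambda|<\infty$ (to keep the right-hand side finite) and $|\Lambda|\ge 2$ (so that the coefficient $|\Lambda|-1$ is positive); the only mildly delicate technical point is arranging the $W_g$ simultaneously compact, contained in $U_0$, and satisfying $g^{-1}W_g\subset U_0$, which is routine given local compactness and separate continuity.
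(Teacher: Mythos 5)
Your identification of $\mathcal R_1$ with the set $S^+$ of positive words is correct, but the proof then breaks at two points, one repairable and one fatal. The repairable one: the identity $v=g^{-1}(gv)$ is false in a polycyclic monoid. The defining relations give $gg^{-1}=1$ but \emph{not} $g^{-1}g=1$; the element $g^{-1}gv$ is already in normal form $u^{-1}w$ with $u=g$ and $w=gv$, so it has a nontrivial negative part and cannot equal the positive word $v$ (e.g.\ $a^{-1}(ab)=a^{-1}ab\ne b$ in $\mathcal P_2$). Hence the inclusion $gA\subset A\cup F_g$ is unjustified as stated. You could repair this by switching to right shifts: $(vg)g^{-1}=v(gg^{-1})=v$ does hold, so choosing compact $W_g\subset U_0$ with $W_g\cdot g^{-1}\subset U_0$ gives $Ag\subset A\cup F_g$, and the sets $Ag$ are pairwise disjoint because a positive word determines its last letter.

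The fatal problem is the counting step. From $\bigl|\bigsqcup_{g\in\Lambda}Ag\bigr|\le |A|+\sum_{g\in\Lambda}|F_g|$ you pass to $(|\Lambda|-1)\,|A|\le\sum_{g}|F_g|$ by subtracting $|A|$ from both sides, which is exactly the step that is invalid when $A$ is infinite --- the case you are trying to rule out. Indeed, the combinatorial configuration you derive is perfectly consistent with $A$ infinite: take $A$ to be the set of all nonempty positive words over a two-letter alphabet and all $F_g=\emptyset$; then the sets $Ag$ are pairwise disjoint, $\bigsqcup_g Ag\subset A$, and $|\Lambda|\cdot|A|=|A|$. So no contradiction is reached, and no purely cardinality-based argument from these inclusions alone can reach one. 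The paper's proof has to inject genuinely more information: it shows each $A_g=\{a\in A: ag\in U_0\}$ is finite, passes to the cofinite subset $A^{*}=A\setminus\bigcup_{g\in\Lambda}{\downarrow}A_g$ and proves that $A^{*}$ is a \emph{right ideal} of $\mathcal R_1$, and then combines separate continuity at $u\cdot 0=0$ for a fixed $u\in A^{*}$ with Lemma~\ref{l3} (the neighborhood $U_0$ meets $\mathcal R_1$ in an infinite set) and Lemma~\ref{proposition-2.8} to produce an infinite subset of $A\cap U_0=\emptyset$. Your argument never uses Lemma~\ref{l3} or any other positive information about what $U_0$ contains, which is why it cannot close.
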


\begin{proof} To derive a contradiction, assume that the set $A:=\mathscr{R}_{1}\setminus U_0$ is infinite. We claim that for every $g\in \Lambda$ the set $A_{g}=\{a\in A: ag\in U_0\}$ is finite. Indeed, suppose that $A_{g}$ is  infinite. By Proposition \ref{proposition-2.8}, $A_{g}\cdot g$ is an infinite subset of $U_0$. Since $0\cdot g^{-1}=0$, the separate continuity of the semigroup operation on $S$ yields a compact neighborhood $V_0\subseteq U_0$ of zero such that $V_0\cdot g^{-1}\subseteq U_0$. Then $V_0\subseteq U_0\setminus (A_{g}\cdot g)$ which contradicts Lemma \ref{l1}.

Let $A^{*}=A\setminus \bigcup_{g\in \Lambda}{\downarrow}A_{g}$ (we recall that ${\downarrow}A_g=\bigcup_{a\in A_g}{\downarrow}a$ where ${\downarrow}a$ is the set of all prefixes of the word $a$). It follows that $A^{*}$ is a cofinite (and hence infinite) subset of $A$. Now we are going to show that $A^{*}$ is a right ideal of $\mathscr{R}_{1}$. In the opposite case we could find elements $c\in\mathscr{R}_{1}$ and $v\in A^{*}$ such that $vc\notin A^{*}$. Let $c^{*}$ be the longest prefix of $c$ such that $vc^{*}\in A^{*}$ (the word $c^{*}$ can be empty, in which case it is the unit of $S$). Then $vc^{*}g\notin A^{*}$ for some $g\in \Lambda$. Observe that $vc^*\in A^*\subset A\subset \mathcal R_1$ implies $vc^*g\in\mathcal R_1$. Assuming that $vc^*g\in U_0$, we conclude that $vc^*\in A_g\subset{\downarrow}A_g$, which contradicts the inclusion $vc^*\in A^*$. So, $vc^*g\notin U_0$ and hence $vc^*g\in A$. Then $vc^*g\notin A^*$ implies that $vc^*g\in{\downarrow}A_f$ for some $f\in \Lambda$ and thus $vc^*\in {\downarrow}A_f$, too. But this contradicts the inclusion $vc^*\in A^*$.
The obtained contradiction implies that $A^{*}$ is a right ideal of $\mathscr{R}_{1}$.

Let $u\in A^{*}$ be an arbitrary element. Since $u\cdot 0=0$, the separate continuity of the semigroup operation yields a compact neighborhood $V_0\subset U_0$ of zero such that $u\cdot V_0\subseteq U_0$. Proposition \ref{proposition-2.8} and Lemma~\ref{l3} imply that $u\cdot(V_0\cap\mathscr{R}_{1})$ is an infinite subset of $A^{*}\cap U_0\subset A\cap U_0$. In particular, $A\cap U_0$ is not empty, which contradicts the definition of the set $A:=\mathscr{R}_{1}\setminus U_0$.
\end{proof}

\begin{lemma}\label{l:8} If the cardinal $\lambda=|\Lambda|$ is finite, then the neighborhood $U_0$ contains all but finitely many elements of any $\mathcal R$-class $\mathcal R_x$, $x\in S$.
\end{lemma}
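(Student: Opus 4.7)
The plan is to reduce the general case to the case $\mathcal{R}_1$ already handled in Lemma~\ref{l4} by transporting a cofinite subset of $\mathcal{R}_1$ into the target $\mathcal{R}$-class via a suitable left multiplication. By Lemma~\ref{l:new}, it suffices to treat the non-zero $\mathcal{R}$-classes, each of which has the form $\mathcal{R}_{u^{-1}}$ for some positive word $u\in S^+$; the class of $0$ is $\{0\}\subset U_0$, while the case $u=1$ is exactly Lemma~\ref{l4}.

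First I would describe $\mathcal{R}_{u^{-1}}$ concretely. Using the defining relations, for any positive word $u=g_1\cdots g_n$ a telescoping computation gives $uu^{-1}=1$, and consequently every element $u^{-1}v$ with $v\in S^{+}$ satisfies $u^{-1}v(u^{-1}v)^{-1}=u^{-1}vv^{-1}u=u^{-1}u$. Thus the map
\[
\phi\colon \mathcal{R}_1\to \mathcal{R}_{u^{-1}},\qquad \phi(v)=u^{-1}v,
\]
is well-defined; it is injective because $uu^{-1}=1$ forces $v=uu^{-1}v=uu^{-1}v'=v'$ from $u^{-1}v=u^{-1}v'$, and it is surjective because each non-zero element of $S$ has a unique representation of the form $p^{-1}q$ with $p,q\in S^+$ and the identity $yy^{-1}=u^{-1}u$ characterising $\mathcal{R}_{u^{-1}}$ forces $p=u$. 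Hence $\phi$ is a bijection.

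Next I would invoke separate continuity of the multiplication. Since $u^{-1}\cdot 0=0\in U_0$, there is a compact neighborhood $V_0\subset U_0$ of zero with $u^{-1}V_0\subset U_0$. Lemma~\ref{l1} ensures that $U_0\setminus V_0$ is finite, and Lemma~\ref{l4} ensures that $\mathcal{R}_1\setminus U_0$ is finite, so $\mathcal{R}_1\setminus V_0$ is finite as well. Applying $\phi$, which is a bijection onto $\mathcal{R}_{u^{-1}}$, gives
\[
\mathcal{R}_{u^{-1}}\setminus U_0\;\subset\;\mathcal{R}_{u^{-1}}\setminus \phi(V_0\cap\mathcal{R}_1)\;=\;\phi(\mathcal{R}_1\setminus V_0),
\]
which is finite, finishing the proof.

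The only delicate point is the clean identification of $\mathcal{R}_{u^{-1}}$ and the verification that $\phi$ is a bijection; once this is in place, the argument is a direct transplantation of the Lemma~\ref{l3} idea, now upgraded using the cofiniteness statement of Lemma~\ref{l4} which is where the finiteness of $\Lambda$ is consumed.
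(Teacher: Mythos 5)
Your argument is correct and follows essentially the same route as the paper: reduce to the class $\mathcal R_1$ handled by Lemma~\ref{l4}, obtain $V_0\subseteq U_0$ with $u^{-1}V_0\subseteq U_0$ by separate continuity, combine Lemmas~\ref{l1} and \ref{l4} to get $\mathcal R_1\setminus V_0$ finite, and transport this by the left translation $v\mapsto u^{-1}v$, which maps $\mathcal R_1$ bijectively onto $\mathcal R_{u^{-1}}$. Your explicit verification that this translation is a bijection is a detail the paper leaves implicit in writing $\mathcal R_{u^{-1}}=u^{-1}\cdot\mathcal R_1$, but the substance is identical.
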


\begin{proof} The lemma is trivial if $x=0$. So we assume that $x\ne 0$. By Lemma~\ref{l:new}, $\mathcal R_x=\mathcal R_{u^{-1}}$ for some positive word $u\in S^+$.  Since $u^{-1}\cdot 0=0$, the separate continuity of the semigroup operation yields an neighborhood $V_0\subseteq U_0$ of zero such that $u^{-1}\cdot V_0\subseteq U_0$. By Lemmas~\ref{l1} and \ref{l4}, $\mathcal R_1\subset^* V_0$ (which means that $\mathcal R_1\setminus V_0$ is finite).
Then $\mathcal R_x=\mathcal R_{u^{-1}}=u^{-1}\cdot\mathscr{R}_{1}\subset^*u^{-1}\cdot V_0\subset U_0$, which means that $U_0$ contains all but finitely many points  of the $\mathcal R$-class $\mathcal R_x$.
\end{proof}

The following lemma proves Main Theorem in case of finite cardinal $\lambda=|\Lambda|$.

\begin{lemma}\label{main} If the cardinal $\lambda$ is finite, then the set $S\setminus U_0$ is  finite.
\end{lemma}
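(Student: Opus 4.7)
The plan is to mirror the proof of Lemma~\ref{l4} almost verbatim, replacing the set $\mathcal R_1\setminus U_0$ of positive words by $A:=S\setminus U_0$. Assume for contradiction that $A$ is infinite. First, for each generator $g\in\Lambda$ the set $A_g:=\{a\in A:ag\in U_0\}$ is finite by the same argument as in Lemma~\ref{l4}: the map $a\mapsto ag$ is injective on all of $S$ because $(ag)g^{-1}=a(gg^{-1})=a$, so if $A_g$ were infinite then $A_g\cdot g$ would be an infinite subset of $U_0$; choosing a compact neighborhood $V_0\subseteq U_0$ of zero with $V_0\cdot g^{-1}\subseteq U_0$ (by separate continuity at $0\cdot g^{-1}=0$) yields $A_g\cdot g\cap V_0=\emptyset$, so $A_g\cdot g\subseteq U_0\setminus V_0$, contradicting Lemma~\ref{l1}. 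Crucially, this step uses nothing about $a$ being a positive word.

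Since $\lambda$ is finite, $\bigcup_{g\in\Lambda}{\downarrow}A_g$ is finite and $A^*:=A\setminus\bigcup_{g\in\Lambda}{\downarrow}A_g$ is still an infinite subset of $A$. I would then establish the closure property $A^*\cdot S^+\subseteq A^*$ by the same prefix-maximality argument as in the middle paragraph of Lemma~\ref{l4}: if $v\in A^*$, $c\in S^+$, and $vc\notin A^*$, take the longest prefix $c^*$ of $c$ with $vc^*\in A^*$ and let $g$ be the next letter of $c$; then $vc^*g\notin A^*$, and splitting on whether $vc^*g$ lies in $U_0$ places $vc^*$ either in $A_g$ (if $vc^*g\in U_0$) or in some ${\downarrow}A_f$ (if $vc^*g\notin U_0$, using that $vc^*$ is a prefix of $vc^*g$), each time contradicting $vc^*\in A^*$. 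Writing $v=u_0^{-1}v_0$ in normal form, $vc^*$ is literally the word $u_0^{-1}v_0 c^*$ of which $v$ is a prefix as a word in $\Lambda\cup\Lambda^{-1}$, so the prefix bookkeeping of Lemma~\ref{l4} carries over without change.

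To finish, pick any $u\in A^*$ and use separate continuity at $u\cdot 0=0$ to obtain a compact neighborhood $V_0\subseteq U_0$ of zero with $uV_0\subseteq U_0$. By Lemma~\ref{l4}, $\mathcal R_1\cap U_0$ is cofinite in the infinite set $S^+$, and by Lemma~\ref{l1}, $U_0\setminus V_0$ is finite, so $V_0\cap S^+$ is infinite. The closure $A^*\cdot S^+\subseteq A^*$ forces $u(V_0\cap S^+)\subseteq A^*\subseteq S\setminus U_0$, whereas $u(V_0\cap S^+)\subseteq uV_0\subseteq U_0$, so this set is empty; but left multiplication by $u=u_0^{-1}v_0$ is injective on $S^+$ (distinct $x$ yield distinct $v_0 x$ and hence distinct $u_0^{-1}(v_0 x)$; alternatively invoke Lemma~\ref{proposition-2.8}), so $u(V_0\cap S^+)$ is infinite, the required contradiction. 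I expect the only non-mechanical point to be the verification of $A^*\cdot S^+\subseteq A^*$ for a general $v\in S$ rather than a positive word; the normal-form identity $vc=u_0^{-1}(v_0 c)$ ensures that the prefix argument of Lemma~\ref{l4} applies unchanged.
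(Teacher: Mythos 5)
Your proof is correct, but it takes a different route from the paper's. The paper proves Lemma~\ref{main} by the same device it uses in Lemmas~\ref{l2} and~\ref{main2}: it invokes Lemma~\ref{l:8} to know that each $\mathcal R$-class meets $S\setminus U_0$ in a finite set, selects from each nonempty $\mathcal R_{u^{-1}}\setminus U_0$ the element $u^{-1}v_u$ with $v_u$ of maximal length, observes that the resulting infinite set $C$ satisfies $C\cdot g\subseteq U_0$ (by maximality of $v_u$), and then pulls back by $g^{-1}$ through a neighborhood $V_0$ with $V_0g^{-1}\subseteq U_0$ to force some $c\in C$ into $U_0$. You instead transplant the right-ideal machinery of Lemma~\ref{l4} to the whole complement $A=S\setminus U_0$: finiteness of each $A_g$, the cofinite set $A^*=A\setminus\bigcup_{g\in\Lambda}{\downarrow}A_g$ (where finiteness of $\Lambda$ enters), the closure property $A^*\cdot S^+\subseteq A^*$ via prefix maximality, and a final contradiction by left-translating an infinite subset of $V_0\cap S^+$ into $A^*\cap U_0=\emptyset$. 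Your extension of the prefix bookkeeping from positive words to general normal forms $u_0^{-1}v_0$ is the one point that needs care, and you handle it correctly. Each approach has its merits: the paper's is shorter because Lemma~\ref{l:8} is already available and does the per-class work; yours bypasses Lemma~\ref{l:8} entirely (needing only Lemmas~\ref{l1}, \ref{l4} and~\ref{proposition-2.8}) at the cost of re-running the ideal construction. A cosmetic remark: at the end you cite Lemma~\ref{l4} to get $V_0\cap S^+$ infinite; Lemma~\ref{l3} (infinite intersection of $U_0$ with $\mathcal R_1=S^+$) would already suffice there.
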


\begin{proof} To derive a contradiction, assume that $S\setminus U_0$ is infinite.  By Lemma~\ref{l:8}, for each $u\in S^+$ the set $\mathscr{R}_{u^{-1}}\setminus U_0$ is finite. Since the complement $S\setminus U_0=\bigcup_{u\in S^+}\mathcal R_{u^{-1}}\setminus U_0$ is infinite, the set $B=\{u\in S^+: \mathscr{R}_{u^{-1}}\setminus U_0\neq\emptyset\}$ is infinite, too. For every $u\in B$ denote by $v_{u}$ the longest word in $S^+$ such that $u^{-1}v_{u}\in \mathscr{R}_{u^{-1}}\setminus U_0$. Then $C=\{u^{-1}v_{u}: u\in B\}\subset \mathcal R_{u^{-1}}\setminus U_0$ is infinite and by Proposition \ref{proposition-2.8}, for every $g\in\Lambda$ the set $C\cdot g$ is an infinite subset of $U_0$. Since $0\cdot g^{-1}=0$, the separate continuity of the semigroup operation yields a neighborhood $V_0\subset U_0$ of zero such that $V_0\cdot g^{-1}\subseteq U_0$. By Lemma~\ref{l1}, the set $U_0\setminus V_0$ is finite. Since the set $Cg\subset U_0$ is infinite, there is an element $c\in C$ with $cg\in V_0$. Then $c=cgg^{-1}\in V_0g^{-1}\subset U_0$, which contradicts the inclusion $C\subset \mathcal R_1\setminus U_0$.
\end{proof}

\begin{lemma}\label{l5} The set $\mathscr{R}_{1}\setminus U_0$ is finite.
\end{lemma}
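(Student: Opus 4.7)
My plan is to mirror the proof of Lemma~\ref{l4}, reusing its scaffolding while supplying an extra argument where the finiteness of $\Lambda$ was essential. So I would assume for contradiction that $A:=\mathscr{R}_1\setminus U_0$ is infinite. First I would reuse the argument of Lemma~\ref{l4} to show that for every $g\in\Lambda$ the set $A_g=\{a\in A:ag\in U_0\}$ is finite: if it were infinite, then the injection $a\mapsto ag$ sends it to an infinite subset of $U_0$, but separate continuity at $0=0\cdot g^{-1}$ produces a compact neighborhood $V_g\subseteq U_0$ of zero with $V_g g^{-1}\subseteq U_0$, forcing $A_g\cdot g\subseteq U_0\setminus V_g$, which is finite by Lemma~\ref{l1}. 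Next I would put $A^{*}:=A\setminus\bigcup_{g\in\Lambda}{\downarrow}A_g$ and verify, by the same tree-combinatorial argument as in Lemma~\ref{l4} (extending $c$ letter by letter through $A^{*}$ and examining the first failure), that $A^{*}$ is a right ideal of $\mathscr{R}_1$; this step uses only combinatorics and goes through unchanged for infinite $\Lambda$.

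Granting that $A^{*}\neq\emptyset$, the contradiction closes exactly as in Lemma~\ref{l4}. Pick any $u\in A^{*}$; by separate continuity of left multiplication at $u\cdot 0=0$ there is a compact neighborhood $V_0\subseteq U_0$ of zero with $u V_0\subseteq U_0$. The right ideal property gives $u(V_0\cap\mathscr{R}_1)\subseteq u\mathscr{R}_1\subseteq A^{*}\subseteq A$, while the choice of $V_0$ gives $u(V_0\cap\mathscr{R}_1)\subseteq U_0$. Lemma~\ref{l3} says $V_0\cap\mathscr{R}_1$ is infinite, and Proposition~\ref{proposition-2.8} (applied with $a=u$, $b=1$) makes the fibres of left multiplication by $u$ finite, so $u(V_0\cap\mathscr{R}_1)$ is infinite; but it also lies in $A\cap U_0=\emptyset$, a contradiction.

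The genuinely new step, and the main obstacle, is to guarantee $A^{*}\neq\emptyset$. For finite $\Lambda$ (Lemma~\ref{l4}) this was automatic because $\bigcup_g{\downarrow}A_g$ is a finite union of finite sets, so $A^{*}$ is cofinite in $A$. For infinite $\Lambda$ this union can be infinite, so an additional topological argument is required. My plan is to establish the intermediate claim that $B:=\bigcup_{g\in\Lambda}A_g$ is in fact finite (which then gives $\downarrow B$ finite and hence $A^{*}$ cofinite in $A$). To prove $B$ finite I would assume the contrary, select distinct generators $g_1,g_2,\dots$ with $A_{g_i}\neq\emptyset$ and representatives $a_i\in A_{g_i}$, consider the resulting infinite sequence $\{a_ig_i\}\subseteq U_0\cap\mathscr{R}_1$, and use separate continuity of left multiplications $x\mapsto h^{-1}x$ for generators $h\in\Lambda\cap U_0$ together with a pigeonhole over first letters to force infinitely many $g_i$ into $\Lambda\cap U_0$; then a further application of left continuity at some $h_0\in\Lambda\cap U_0$ whose set of $U_0$-children is infinite produces infinitely many generators in $\Lambda\cap U_0$ simultaneously inside and outside a suitable neighborhood of zero, violating Lemma~\ref{l1}. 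I expect most of the work in the proof to be concentrated in making this last reduction precise.
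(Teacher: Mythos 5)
Your first two paragraphs are sound and correctly isolate where the difficulty lies: each $A_g$ is finite (your argument via $a=agg^{-1}\in V_gg^{-1}\subseteq U_0$ is exactly right, and is the one the paper uses), the prefix/right-ideal combinatorics for $A^{*}=A\setminus\bigcup_{g\in\Lambda}{\downarrow}A_g$ is alphabet-independent, and a single element of $A^{*}$ yields the contradiction via Lemma~\ref{l3} and Lemma~\ref{proposition-2.8}. The gap is in the third paragraph, and it is not a technicality: the claim that $B=\bigcup_{g\in\Lambda}A_g$ is finite (or even that $A^{*}\neq\emptyset$) is essentially as hard as the lemma itself, and the sketch given for it does not work. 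Nothing forces any generator $g_i$ to lie in $U_0$: membership of the product $a_ig_i$ in $U_0$ gives no information about the factor $g_i$, and the only tool available, separate continuity, produces for each \emph{fixed} element $s$ a neighborhood $V$ with $sV\subseteq U_0$ or $Vs\subseteq U_0$ --- it cannot be applied uniformly to the infinitely many distinct translations $x\mapsto xg_i^{-1}$ (or $x\mapsto a_i^{-1}x$) that your reduction would require. The ``pigeonhole over first letters'' also fails outright when $\Lambda$ is infinite, since the words $a_i$ may all begin with distinct generators, and the final step (``generators simultaneously inside and outside a suitable neighborhood'') is not an argument. So the scenario in which every element of $A$ is a prefix of some element of some $A_g$, making $A^{*}=\emptyset$, is not excluded.

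The paper avoids this obstruction by a genuinely different device: a dichotomy over finite sub-alphabets. For each finite $F\subset\Lambda$ let $S_F$ be the submonoid generated by $F\cup F^{-1}\cup\{0,1\}$. Either $U_0\cap S_F$ is finite for every finite $F$, or it is infinite for some finite $F$. In the second case one runs your $A_g$/${\downarrow}A_g$ argument only over $g\in F$ (so the union is finite and $A_F=A\setminus\bigcup_{g\in F}{\downarrow}A_g$ is automatically cofinite in $A$), shows $A_F$ is a right ideal under $S_F\cap\mathcal R_1$, and then derives a contradiction with Lemma~\ref{main} applied to the closed, locally compact, non-discrete polycyclic submonoid $S_D$ over a suitable finite $D\supset F$ --- i.e., the already-proven finite-alphabet case does the heavy lifting. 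In the first case one works instead with the sets $W_g=\{a\in U_0\cap\mathcal R_1:ag\notin U_0\}$ inside $U_0$ (again only for $g$ in a finite $F$) and contradicts the finiteness of $U_0\cap S_D$. If you want to salvage your write-up, you should replace the third paragraph by this case split; as it stands the proof is incomplete at its crucial point.
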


\begin{proof} To derive a contradiction, assume that the complement $A:=\mathscr{R}_{1}\setminus U_0$ is infinite. By Lemma~\ref{l3}, the set $U_0\cap\mathcal R_1$ is infinite.

 For a finite subset $F\subset \Lambda$, let $S_F$ be the smallest subsemigroup of $S$ containing the set $F\cup F^{-1}\cup\{0,1\}$. If $|F|\ge 2$, then $S_F$ is a polycyclic monoid.
Separately, we shall consider two cases.
\smallskip

1. First assume that for every finite subset $F\subset\Lambda$ the set $U_0\cap S_F$ is finite.  In this case for every point $g\in \Lambda$, consider the set $W_{g}=\{a\in U_0\cap\mathscr{R}_{1}: ag\notin U_0\}$. The separate continuity of the semigroup operation yields a neighborhood $V_0\subset U_0$ of zero such that $V_0\cdot g\subset U_0$. Lemma~\ref{l1} implies that the set $W_{g}\subset U_0\setminus V_0$ is finite and hence for every non-empty finite subset $F\subset\Lambda$ the set $U_{F}:=(U_0\cap\mathscr{R}_{1})\setminus\bigcup_{g\in F}W_g$ is infinite. We claim that $U_{F}\cdot y\subseteq U_{F}$ for every $y\in S_{F}\cap\mathscr{R}_{1}$. In the opposite case, there exist elements $y\in S_{F}\cap\mathscr{R}_{1}$ and $x\in U_{F}$ such that $xy\notin U_{F}$. Let $y^{*}$ be the longest prefix of $y$ such that $xy^{*}\in U_{F}$ (note that $y^{*}$ could be equal to $1$). Then $xy^{*}g\notin U_{F}$ for some $g\in F$. Hence $xy^{*}\in W_g$ which contradicts the definition of $U_{F}\ni xy^*$. Hence $U_{F}\cdot y\subseteq U_{F}$ for each element $y\in S_{F}\cap\mathscr{R}_{1}$.

Fix any element $v\in U_{F}$ and find a finite subset $D\subset \Lambda$ such that $v\in S_D$, $F\subset D$ and $|D|\ge 2$. Proposition \ref{proposition-2.8} implies that $v\cdot(S_{F}\cap\mathscr{R}_{1})$ is an infinite subset of $U_{F}\cap S_{D}$, which contradicts our assumption.
\smallskip

%Then since $\mathcal{P}_{D}$ is a closed and hence locally compact submonoid of $\mathcal{P}_{\lambda}$ which is isomorphic to the $\mathcal{P}_{m}$ for some positive integer $m$, Theorem \ref{main} implies that the set $\mathcal{P}_{D}\setminus U(0)$ is finite. Hence the set $U(0)\cap \mathcal{P}_{C}$ is infinite. The contradiction.

2. Next, assume that for some finite subset $F\subset\Lambda$ the intersection $U_0\cap S_{F}$ is infinite. For every $g\in F$ consider the subset $A_{g}:=\{a\in A: ag\in U_0\}$ of the infinite set $A=\mathcal R_1\setminus U_0$. The separate continuity of the semigroup operation yields a neighborhood $V_0\subset S$ of zero such that $V_0\cdot g^{-1}\subset U_0$. We claim that for every $a\in A_g$ we get $ag\notin V_0$. In the opposite case we would get $a=agg^{-1}\in V_0\cdot g^{-1}\subset U_0$, which contradicts the inclusion $a\in A$. Then $A_g=\{a\in A:ag\in U_0\setminus V_0\}$ and this set is finite by Lemmas~\ref{l1} and \ref{proposition-2.8}.  It follows that $A_F=A\setminus\bigcup_{g\in F}{\downarrow}A_g$ is a cofinite (and hence infinite) subset of $A$.

We claim that $A_{F}\cdot y\subseteq A_{F}$ for every $y\in S_{F}\cap\mathscr{R}_{1}$. In the opposite case, we can find elements $y\in S_{F}\cap\mathscr{R}_{1}$ and $x\in A_{F}$ such that $xy\notin A_{F}$. Let $y^{*}$ be the longest prefix of $y$ such that $xy^{*}\in A_{F}$ (note that $y^{*}$ could be equal to $1$). Then $xy^{*}g\notin A_{F}$ for some $g\in F$.
It follows from $xy^*\in A_F\subset A=\mathcal R_1\setminus U_0$ and $gg^{-1}=1$ that $xy^*g\in\mathcal R_1$. Assuming that $xy^*g\in U_0$, we conclude that $xy^*\in A_g$, which contradicts the inclusion $xy^*\in A_F$. So, $xy^*g\in\mathcal R_1\setminus  U_0=A$ and then $xy^*g\notin A_F$ implies that $xy^*g\in {\downarrow}A_h$ for some $h\in F$ and finally $xy^*\in{\downarrow}A_h$, which contradicts the inclusion $xy^*\in A_F$.
This contradiction completes the proof of the inclusion $A_{F}\cdot y\subseteq A_{F}$ for each $y\in S_{F}\cap\mathscr{R}_{1}$.

Fix any element $v\in A_{F}$ and find a finite subset $D\subset \Lambda$ such that $v\in S_D$, $F\subset D$ and $|D|\ge 2$. The subset $S_{D}$ contains the unique non-isolated point of the space $S$ and hence is closed in $S$. The local compactness of $S$ implies the local compactness of the polycyclic monoid $S_{D}$ endowed with the subspace topology. Lemma~\ref{l1} and our assumption guarantee that the semitopological polycyclic monoid $S_D$ is not discrete. By
Proposition \ref{proposition-2.8}, $v\cdot(S_{F}\cap\mathscr{R}_{1})$ is an infinite subset of $A_F\cap S_{D}\subset S_{D}\setminus U_0$. But this contradicts  Lemma~\ref{main} (applied to the locally compact polycyclic monoid $S_D$ and the neighborhood $U_0\cap S_D$ of zero in $S_D$).
\end{proof}

\begin{lemma}\label{c2} The neighborhood $U_0$ contains all but finitely many points of each $\mathcal R$-class in $S$.
\end{lemma}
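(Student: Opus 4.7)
The plan is to recycle the argument of Lemma~\ref{l:8} almost verbatim, but now invoking Lemma~\ref{l5} in place of Lemma~\ref{l4}. The crucial point is that Lemma~\ref{l4} required $\lambda$ to be finite, whereas Lemma~\ref{l5} has just been established for arbitrary $\lambda$, so the finiteness hypothesis can be dropped from the conclusion of Lemma~\ref{l:8} as well.

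First I would dispose of the trivial case: the $\mathcal R$-class $\mathcal R_0=\{0\}$ is entirely contained in $U_0$. So fix a non-zero element $x\in S$. By Lemma~\ref{l:new}, there is a positive word $u\in S^+$ with $\mathcal R_x=\mathcal R_{u^{-1}}$, and moreover the equality $\mathcal R_{u^{-1}}=u^{-1}\cdot\mathcal R_1$ holds (as already used in the proof of Lemma~\ref{l:8}; it follows from Lemma~\ref{l:new} together with the fact that $u^{-1}\cdot y\in\mathcal R_{u^{-1}}$ whenever $y\in\mathcal R_1$).

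Next I would exploit separate continuity: since $u^{-1}\cdot 0=0$, there is a neighborhood $V_0\subseteq U_0$ of zero with $u^{-1}\cdot V_0\subseteq U_0$. By Lemma~\ref{l1} the set $U_0\setminus V_0$ is finite, and by Lemma~\ref{l5} the set $\mathcal R_1\setminus U_0$ is finite; combining these yields that $\mathcal R_1\setminus V_0$ is finite. Consequently
\[
\mathcal R_x=\mathcal R_{u^{-1}}=u^{-1}\cdot\mathcal R_1\subset^* u^{-1}\cdot V_0\subseteq U_0,
\]
which is precisely the claim.

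There is no real obstacle here: the heavy lifting was already done in Lemma~\ref{l5}, and the present lemma is a one-line consequence of it via the same translation-by-$u^{-1}$ trick that powered Lemma~\ref{l:8}. The only thing to watch is that $\mathcal R_{u^{-1}}=u^{-1}\mathcal R_1$ is used as a known identity; if needed, one verifies it directly by writing an arbitrary element of $\mathcal R_{u^{-1}}$ in the canonical form $w^{-1}v$ and observing that the defining relation $xS=u^{-1}S$ forces $w=u$, so that the element has the shape $u^{-1}v$ with $v\in\mathcal R_1$.
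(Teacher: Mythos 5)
Your proof is correct and follows essentially the same route as the paper's: reduce to $\mathcal R_{u^{-1}}$ via Lemma~\ref{l:new}, take $V_0$ with $u^{-1}V_0\subseteq U_0$ by separate continuity, and combine Lemmas~\ref{l1} and \ref{l5} to get $\mathcal R_1\subset^* V_0$, hence $\mathcal R_{u^{-1}}=u^{-1}\mathcal R_1\subset^* U_0$. Your extra remark justifying the identity $\mathcal R_{u^{-1}}=u^{-1}\mathcal R_1$ is a welcome addition that the paper leaves implicit.
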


\begin{proof} By Lemma~\ref{l:new}, it suffices to check that for any $u\in S^+$ the set $\mathcal R_{u^{-1}}\setminus U_0$ is finite. The separate continuity of the semigroup operation yields a compact neighborhood $V_0\subseteq U_0$ of zero such that $u^{-1}\cdot V_0\subseteq U_0$. By Lemmas~\ref{l5} and \ref{l1}, we get $\mathcal R_1\subset^* V_0$. Then $\mathcal R_{u^{-1}}=u^{-1}\cdot\mathcal R_1\subset^* u^{-1}\cdot V_0\subset U_0$, which means that the set $\mathscr{R}_{u^{-1}}\setminus U_0$ is finite.
\end{proof}

Our final lemma combined with Lemma~\ref{l:1} proves Main Theorem and shows that the semitopological polycyclic monoid $S$ carries the topology of one-point compactification of the discrete space $S\setminus\{0\}$.

\begin{lemma}\label{main2} The complement $S\setminus U_0$ is finite and hence $S$ is compact.
\end{lemma}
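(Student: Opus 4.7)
The plan is to argue by contradiction, essentially replaying the proof of Lemma~\ref{main} but with Lemma~\ref{c2} (now available for all cardinals $\lambda$) in place of Lemma~\ref{l:8}. Suppose $S\setminus U_0$ is infinite. Since each non-zero element of $S$ has a unique reduced form $u^{-1}v$ with $u,v\in S^+$, Lemma~\ref{l:new} gives the partition $S\setminus\{0\}=\bigsqcup_{u\in S^+}\mathcal R_{u^{-1}}$. By Lemma~\ref{c2}, every set $\mathcal R_{u^{-1}}\setminus U_0$ is finite, so the index set $B=\{u\in S^+:\mathcal R_{u^{-1}}\setminus U_0\neq\emptyset\}$ must be infinite.

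Next, for every $u\in B$ I would choose the longest positive word $v_u\in S^+$ for which $u^{-1}v_u\in\mathcal R_{u^{-1}}\setminus U_0$; such a longest word exists precisely because $\mathcal R_{u^{-1}}\setminus U_0$ is finite. This produces an infinite set $C=\{u^{-1}v_u:u\in B\}\subset S\setminus U_0$. Now fix any generator $g\in\Lambda$ (recall $|\Lambda|\ge 2$). By the maximality of $v_u$, the element $u^{-1}v_ug\in\mathcal R_{u^{-1}}$ cannot lie outside $U_0$, for $v_ug$ is a strictly longer positive word. Hence $C\cdot g\subset U_0$.

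To finish, invoke separate continuity at the identity $0=0\cdot g^{-1}$ to pick a compact neighborhood $V_0\subset U_0$ of zero with $V_0\cdot g^{-1}\subset U_0$. If some $a\in C$ satisfied $ag\in V_0$, then $a=agg^{-1}\in V_0\cdot g^{-1}\subset U_0$, contradicting $C\cap U_0=\emptyset$. Therefore $C\cdot g\subset U_0\setminus V_0$, and the latter set is finite by Lemma~\ref{l1}. Applying Lemma~\ref{proposition-2.8} to the equation $xg=y$, each fibre $\{a\in S:ag=y\}$ is finite, so $C=\bigcup_{y\in U_0\setminus V_0}\{a\in C:ag=y\}$ is a finite union of finite sets — contradicting the infinitude of $C$.

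I do not expect a genuine obstacle here: the argument is a clean adaptation of Lemma~\ref{main}, with the subtle point being the \emph{choice of the maximal $v_u$}, which simultaneously ensures $C\subset S\setminus U_0$ and $Cg\subset U_0$. Once the contradiction is obtained, $S\setminus U_0$ is finite; combined with Lemma~\ref{l:1}, this finite set consists of isolated points, so $S=U_0\cup(S\setminus U_0)$ is compact, and the topology on $S$ is that of the one-point compactification of the discrete space $S\setminus\{0\}$, completing the proof of Main Theorem.
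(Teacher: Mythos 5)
Your argument is correct and is essentially the paper's own proof: the same choice of the maximal word $v_u$ producing the infinite set $C\subset S\setminus U_0$ with $C\cdot g\subset U_0$, the same use of $V_0\cdot g^{-1}\subset U_0$ to force $C\cdot g\subset U_0\setminus V_0$, and the same contradiction via Lemmas~\ref{l1} and \ref{proposition-2.8}. You merely phrase the final step as ``finite fibres of $x\mapsto xg$ make $C$ finite'' rather than ``$C\cdot g$ is an infinite subset of $U_0\setminus V_0$,'' which is the same contradiction.
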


\begin{proof} To derive a contradiction, assume that the set $S\setminus U_0$ is infinite.
By Lemma~\ref{c2}, for each $u\in S^+$ the set $\mathcal R_{u^{-1}}\setminus U_0$ is finite. Since $S=\bigcup_{u\in S^+}\mathcal R_{u^{-1}}$, the set  $B=\{u\in S^+: \mathscr{R}_{u^{-1}}\setminus U_0\neq\emptyset\}$ is infinite. For every $u\in B$ denote by $v_{u}$ the longest word in $S^+$ such that $u^{-1}v_{u}\in\mathscr{R}_{u^{-1}}\setminus U_0$. Then $C=\{u^{-1}v_{u}: u\in B\}$ is an infinite subset of $S\setminus U_0$.  By Lemma \ref{proposition-2.8}, for any $g\in\Lambda$ the set $C\cdot g$ is infinite. The separate continuity of the semigroup operation yields a neighborhood $V_0\subset U_0$ of zero such that $V_0\cdot g^{-1}\subset U_0$. Then $V_0\subset U_0\setminus (C\cdot g)$ which contradicts Lemma~\ref{l1}.
\end{proof}

\section*{Acknowledgements}

The author acknowledges professors Taras Banakh and Oleg Gutik for their fruitful comments and suggestions.

%-------------------------------------------------------------------%

\end{document}